\newtheorem{theo}{Theorem}
\newenvironment{theorem}{\vspace{4mm}\begin{theo}}{\end{theo}}
\newtheorem{lem}[theo]{Lemma}
\newenvironment{lemma}{\vspace{4mm}\begin{lem}}{\end{lem}}
\newtheorem{coro}[theo]{Corollary}
\newtheorem{rem}[theo]{Remark}
\newtheorem{prop}[theo]{Proposition}
\newenvironment{proposition}{\vspace{4mm}\begin{prop}}{\end{prop}}
\newtheorem{defi}{Definition}
\newtheorem{assump}{Assumption}
\newtheoremstyle{citing}{}{}{\itshape}{}{\bfseries}{.}%
 { }{\thmnote{#3}}
\theoremstyle{citing}
\newcommand{\gap}{{\rm gap}}
 \newcommand{\Borel}{\mathcal{B}}
\newcommand{\N}{\mathbb{N}}
\newcommand{\R}{\mathbb{R}}
\newcommand{\abs}[1]{\left\vert #1 \right\vert}	
\newcommand{\norm}[1]{\left\Vert #1 \right\Vert}	
\newcommand{\E}{\mathbb{E}}
\renewcommand{\a}{\alpha}
\newcommand{\eps}{\varepsilon}
\newcommand{\dint}{\text{\rm d}}
\newcommand{\mask}[1]{{}}
\newcommand{\dr}[1]{{\black #1}}
\begin{document}

\doublespacing

\title[Error bounds of MCMC]{Error bounds of MCMC 
for functions with unbounded stationary variance}

\author{Daniel Rudolf}
\address{Inst. f\"ur Math.\\ Universit\"at Jena\\
Ernst-Abbe-Platz 2\\ 07743 Jena\\ Germany}
\email{daniel-rudolf@uni-jena.de}

\author{Nikolaus Schweizer}
\address{Mercator School of Management\\ Universit\"at Duisburg-Essen\\
Lotharstr. 65\\ 47057 Duisburg\\ Germany}
\email{nikolaus.schweizer@uni-due.de }

\begin{abstract}
 We prove explicit error bounds for Markov chain Monte Carlo (MCMC)
 methods to compute expectations of functions with unbounded
 stationary variance. We assume that there is a $p\in(1,2)$ so that 
 the functions have finite 
 $L_p$-norm.
 For uniformly ergodic Markov chains we obtain error bounds with 
 the optimal order of convergence
 $n^{1/p-1}$ and if there exists a spectral gap we almost get the optimal order.
 Further, a burn-in period is taken into account and a recipe for choosing the
 burn-in is provided.
\end{abstract}

\subjclass{Primary: 60J05, 65C40; Secondary: 65C05, 60J22}
\keywords{Markov chain Monte Carlo, 
absolute mean error, uniform ergodicity, spectral gap
}

\maketitle

%
\section{Introduction}
Let $G$ be a metric space and let 
$\Borel(G)$ 
be the corresponding Borel $\sigma$-algebra.
We study the problem of computing an expectation of a measurable function, 
say $f\colon G \to \R$, with respect
to a probability measure $\pi$. 
Thus we want to know
\[
 \E_{\pi} (f) = \int_G f(x)\, {\rm d}\pi(x).
\]
We assume that the variance 
$\E_{\pi} (f^2) - \E_{\pi} (f)^2$
is not finite, but that there
is a $p\in (1,2)$ such that
\[
 \norm{f}_{p} = \left( \int_G \abs{f(x)}^p \,\dint\pi(x) \right)^{1/p} < \infty.
\] 
This is the case if $f$ has a singularity, 
e.g. $f(x)=|x|^{-d/2}$ and $\pi$ 
has a bounded strictly positive density over a compact convex set $G\subset \R^d$ with  $0\in G$. 
Here, $|\cdot|$ denotes the Euclidean norm in $\mathbb{R}^d$.
Another application with $G\subseteq \R^d$ is the computation of a 
$2$nd
moment of $\pi$,
say $f(x)=x_d^2$, when the 
$4$th
moment of $\pi$ is infinite due to heavy tails.

Our focus is on situations where $\pi$ and $f$ are complicated and thus Monte Carlo algorithms are applied.
Often one does not have an i.i.d. sequence of random variables with distribution $\pi$.
For instance, this is the case if $\pi$ is only known up to a normalizing constant. 
Such situations naturally arise in Bayesian Statistics and Statistical Physics.
Markov Chain Monte Carlo (MCMC) methods are a popular approach for overcoming this problem.

The basic idea of MCMC is to approximate $\pi$ using a Markov chain $(X_n)_{n\in \mathbb{N}}$ 
with transition kernel $K$ and initial distribution $\nu$. Here $\pi$ 
is the stationary and limit distribution.
Then, one approximates $\E_\pi(f)$ by
\begin{equation}
  S_{n,n_0}(f) = \frac{1}{n} \sum_{j=1}^n f(X_{j+n_0}),
 \end{equation}
 where $n$ denotes the number of function evaluations and 
$n_0$ the burn-in. The burn-in is the number of 
steps needed to get sufficiently close to $\pi$.

By an ergodic theorem, 
see \cite{AsGl11} or \cite[Theorem~17.1.7, p.~427]{MeTw09},
the MCMC method is well defined, i.e.
for a $\varphi$-irreducible and Harris recurrent Markov chain 
$
 \lim_{n\to \infty} S_{n,n_0}(f) = \mathbb{E}_\pi(f)
$
holds almost surely for any $f$, with finite $\norm{f}_{1}$.
For $p\in[2,\infty]$, the ergodic theorem is usually augmented 
by a central limit theorem, for a survey see \cite{Jo04} and  
for estimating with confidence see \cite{FlJo11}.
The corresponding limit theorems for 
$p\in[0,2)$ have been less studied, see 
\cite{CaMa13} for recent results and an overview.

Under different convergence assumptions 
on the Markov chain, various non-asymptotic bounds on 
the mean square error of $S_{n,n_0}$ are known, see for example
\cite{BeCh09,
JoOl10,LaMiNi13,LaNi11,Ru09,Ru12}.
In particular, for $p\in[2,\infty]$ with finite $\norm{f}_{p}$ 
in \cite[Theorem~3.34 and Theorem~3.41]{Ru12} explicit error
bounds of $S_{n,n_0}$ are provided.
However, no explicit error bounds are known for $p\in(1,2)$. 
The present paper closes this gap.

For $p\in(1,2]$ we prove bounds on the absolute mean error 
\[
e_1(S_{n,n_0},f) = \E \abs{S_{n,n_0}(f)-\E_\pi(f)},
\]
for functions $f$, with $\norm{f}_{p}<\infty$.
We consider the absolute error
since the root mean square error of $S_{n,n_0}$ is not
necessarily finite for $p\in(1,2)$.

It is known that any algorithm that uses only $n$ function values of $f$, i.e.
$
 A_n(f) = \phi(f(X_1),\dots,f(X_n))
$
for some $\phi\colon \R^n \to \R$ 
with an arbitrary, possibly random sample $X_1,\dots,X_n \in G$,
satisfies
\begin{equation}  \label{eq: opt_order}
\sup_{\norm{f}_{p} \leq 1} e_1(A_{n},f) \geq c\cdot n^{1/p-1},
\end{equation}
for $p\in(1,2]$ and some number $c>0$.
For a proof of this fact follow the arguments of
\dr{\cite[Theorem~5.3]{He94} or \cite[Section~2.2.9, Proposition~1 with $k=0$]{No88}}.
Therefore, our bounds cannot decay faster than $n^{1/p-1}$, 
the optimal order of convergence.

Our absolute mean error bounds satisfy the following
properties:
\begin{itemize}
 \item For reversible uniformly ergodic 
 Markov chains we obtain the optimal order
 of convergence $n^{1/p-1}$. For Markov chains with a spectral gap we come arbitrarily close to the optimal order.
 \item We quantify the penalty 
 that arises since the initial distribution 
 $\nu$ is not the stationary distribution. 
 This penalty appears in our bounds through $\log\norm{\frac{d \nu}{d \pi}-1}_{\infty}$, where $\frac{d \nu}{d \pi}$
 denotes the density of $\nu$ with respect to $\pi$. 
 This effect is controlled 
 by the choice of the burn-in $n_0$.
We provide a recipe for choosing $n_0$.
\end{itemize}
The main idea of proof is simple and adapted from \cite[Proposition~5.4]{He94}.
The key technique 
is to apply the interpolation theorem 
of Riesz-Thorin to the absolute mean error and root mean square error, 
viewed as operators. 

\section{Main results}
This section summarizes our main results which are proved in Section~\ref{sec: prel}.
Let $(X_n)_{n\in\mathbb{N}}$ be a 
Markov chain with transition kernel $K$ and initial distribution $\nu$.
We always assume that $\pi$ is a stationary distribution.

By $L_2=L_2(\pi)$ we denote the set of all square integrable functions
with respect to $\pi$. For $f\in L_2$ note that the transition kernel
$K$ induces the Markov operator
\begin{equation}  \label{eq: MO}
  Pf (x) = \int_G f(y)\, K(x,\dint y). 
\end{equation}
We denote the
spectral gap of the Markov operator by 
\[
  \gap(P)=1-\norm{P-\E_{\pi}}_{L_2 \to L_2} 
\]
Now we state the error bound for reversible uniformly ergodic Markov chains.

\begin{theorem}  \label{thm: error_bound_uniform}
 Let us assume that we have a reversible Markov chain with transition kernel $K$
 and initial distribution $\nu$. Let $K$ be uniformly ergodic, 
 i.e. for an $\alpha \in [0,1)$ and an $M\in (0,\infty)$ it
 holds for $\pi$-almost all $x\in G$ that
 \begin{equation}  \label{eq: unif_erg}
    \norm{K^n(x,\cdot)-\pi}_{\mbox{\rm tv}} 
  \leq \a^n M,
 \end{equation}
 where $\norm{\cdot}_{\mbox{\rm tv}}$ denotes the total variation distance.	
 Further, assume that there exists $\frac{d\nu}{d\pi}$, 
 with finite $\norm{\frac{d\nu}{d\pi}}_{\infty}$, where $\frac{d\nu}{d\pi}$ 
 denotes the density of $\nu$ with respect to $\pi$.
 Let $p\in (1,2]$ and assume that $n_0 \in \N_0$ satisfies 
  \[
   n_0 \geq \frac{\log(2\,M \norm{\frac{d\nu}{d\pi}-1}_{\infty})}{ 1-\alpha}.
  \]
 Then
  \[
   e_1(S_{n,n_0},f) \leq 
   \frac{4\norm{f}_{p}}{ (n\cdot \gap(P))^{1-1/p}} +
    \frac{4 \norm{f}_{p}}{ (n\cdot(1-\alpha) )^{2-2/p}}.
    \]
\end{theorem}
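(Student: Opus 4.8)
The plan is to regard the map $f \mapsto T_n f := S_{n,n_0}(f) - \E_\pi(f)$ as a single linear operator and to interpolate its operator norms as measured by the absolute mean error and by the root mean square error. Writing $\P$ for the distribution of the chain started in $\nu$ (so that $\E$ is integration against $\P$), one has $T_n f \in L_q(\P)$ with $e_1(S_{n,n_0},f) = \norm{T_n f}_{L_1(\P)}$ and $e_2(S_{n,n_0},f) = \norm{T_n f}_{L_2(\P)}$. Since $S_{n,n_0}$ is linear in $f$, the operator $T_n$ is linear, and because it annihilates constants we have $T_n f = T_n(f - \E_\pi f)$. The goal is to bound the operator norm of $T_n$ at the endpoints $p=1$ and $p=2$ and then apply the Riesz--Thorin theorem with domain measure $\pi$ on $G$ and codomain measure $\P$ on path space.

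First I would establish the $L_1$ endpoint, i.e. a bound $\norm{T_n f}_{L_1(\P)} \le A_1 \norm{f}_1$ with $A_1$ an absolute constant. By reversibility the density of $X_{j+n_0}$ with respect to $\pi$ equals $P^{j+n_0}\frac{d\nu}{d\pi}$, and since $\E_\pi(\frac{d\nu}{d\pi}-1)=0$ and $P\ind=\ind$ this density equals $\ind + P^{j+n_0}(\frac{d\nu}{d\pi}-1)$. Uniform ergodicity (\ref{eq: unif_erg}) yields a decay estimate of the form $\norm{P^{m}(\frac{d\nu}{d\pi}-1)}_\infty \le 2\,\alpha^{m} M \norm{\frac{d\nu}{d\pi}-1}_\infty$, and the choice of $n_0$ together with $\alpha^{n_0}\le e^{-(1-\alpha)n_0}$ forces this quantity to be at most a constant; since $P$ is an $L_\infty$-contraction the same bound holds for every $j\ge 1$. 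Hence each $X_{j+n_0}$ has law dominated by a constant multiple of $\pi$, so $\E\abs{f(X_{j+n_0})} \le c\norm{f}_1$; summing over $j$ and adding $\abs{\E_\pi f}\le \norm{f}_1$ through the triangle inequality gives the claimed endpoint with a constant independent of $n$.

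Next I would invoke the $L_2$ endpoint, namely the mean square error bound for square integrable functions established in Section~\ref{sec: prel} (in the spirit of \cite{Ru12}), which for a reversible chain with spectral gap and the controlled start above takes the two-term form
\[
 \norm{T_n f}_{L_2(\P)} \;\le\; \frac{a\,\norm{f}_2}{(n\cdot\gap(P))^{1/2}} + \frac{b\,\norm{f}_2}{n(1-\alpha)}
\]
for explicit constants $a,b$; here $\norm{f}_2$ may replace $\norm{f-\E_\pi f}_2$ because $T_n$ kills the mean. Finally, applying Riesz--Thorin to $T_n\colon L_p(\pi)\to L_p(\P)$ with the interpolation parameter $\theta = 2-2/p$ (so that $1/p=(1-\theta)+\theta/2$ in both domain and codomain) gives $\norm{T_n f}_{L_p(\P)} \le A_1^{1-\theta}A_2^{\theta}\norm{f}_p$, where $A_2$ denotes the $L_2$ operator norm above. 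Because $\P$ is a probability measure, $e_1(S_{n,n_0},f)=\norm{T_n f}_{L_1(\P)}\le \norm{T_n f}_{L_p(\P)}$, and the subadditivity $(x+y)^{\theta}\le x^{\theta}+y^{\theta}$ for $\theta\in(0,1]$ splits $A_2^{\theta}$ into the two terms with denominators $(n\,\gap(P))^{1-1/p}$ and $(n(1-\alpha))^{2-2/p}$; tracking the constants $A_1^{1-\theta}a^{\theta}$ and $A_1^{1-\theta}b^{\theta}$ into the stated factor $4$ completes the argument.

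The main obstacle is the rigorous deployment of Riesz--Thorin in this non-standard guise: the operator acts between $L_p$ spaces over two different measure spaces ($\pi$ on $G$ and $\P$ on path space), so one must check that $T_n$ is defined on a common dense subspace (for instance the bounded measurable functions) and that both endpoint bounds extend there before interpolating. The second genuinely delicate point is producing the $L_2$ endpoint in exactly the two-term shape above with usable constants, since this requires the non-asymptotic correlation analysis of $\frac1n\sum_j (f-\E_\pi f)(X_{j+n_0})$ under a non-stationary start, combining the spectral-gap decay of the stationary correlations with a burn-in correction term. Once these two ingredients are in place, the $L_1$ endpoint and the interpolation bookkeeping are routine.
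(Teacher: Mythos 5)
Your proposal is correct and follows essentially the same route as the paper: the same operator $T(f)=S_{n,n_0}(f)-\E_\pi(f)$, the same two endpoints (an $L_1(\pi)\to L_1$ bound via the density of $X_{j+n_0}$ and the $L_\infty$-decay from uniform ergodicity, which is the paper's Lemma~\ref{lem: abs_err}, and the $L_2$ mean square error bound cited from \cite{Ru12}, which is Proposition~\ref{prop: mse_uniform_erg}), interpolated by Riesz--Thorin with $\theta=2-2/p$, followed by $e_1\leq e_p$, subadditivity of $x\mapsto x^\theta$, and the burn-in condition to absorb the constants. The only difference is cosmetic: the paper first derives the general bound \eqref{eq: to_prove_unif_erg} and imposes the burn-in condition at the very end, whereas you invoke it earlier to make the endpoint constants absolute.
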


First, note that uniform ergodicity 
implies $\gap(P)\geq 1-\alpha$.
The upper bound might be interpreted as follows: The burn-in 
$n_0$ is used to decrease the influence of the initial distribution.
The number $n$ decreases the error of the averaging procedure. The leading
term has the optimal order of convergence $n^{1/p-1}$, see \eqref{eq: opt_order}. 
The spectral gap appears in the leading term and $1-\a$ appears in the
higher order term. Both quantities describe the price we pay for 
using Markov
chains for approximate sampling.
If one can sample with respect to $\pi$, then $\a=0$, $\gap(P)=1$ and $\nu=\pi$.
Thus $n_0=0$ and the error bound is, up to a constant factor, 
the same as in \cite[Proposition~5.4]{He94}.

Now we state the result for Markov chains with a spectral gap.
Note that here we do not assume that the Markov chain is reversible.
 \begin{theorem}  \label{thm: error_bound_spectral_gap}
 Let us assume that we have a Markov chain with transition kernel $K$ 
 and initial distribution $\nu$. Let $\gap(P)>0$ 
 and further assume that there exists $\frac{d\nu}{d\pi}$, 
 with finite $\norm{\frac{d\nu}{d\pi}}_{\infty}$, where $\frac{d\nu}{d\pi}$ 
 denotes the density of $\nu$ with respect to $\pi$.
 Let $\delta\in (0,1]$, $p\in (1+\delta,2]$ 
 and assume that $n_0 \in \N_0$ satisfies 
  \[
   n_0 \geq 
 \frac{\log(64\, \delta^{-1} \norm{\frac{d\nu}{d\pi}-1}_{\infty}) 
  }{\delta\cdot \gap(P)}.
  \]
 Then
  \[
   e_1(S_{n,n_0},f) \leq 
    \frac{8\norm{f}_{p}}{ (n\cdot \gap(P))^{1-\frac{1+\delta}{p}}} +
    \frac{8 \norm{f}_{p}}{ (n\cdot \gap(P))^{2-\frac{2(1+\delta)}{p}}}.
    \]
\end{theorem}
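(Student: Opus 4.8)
The plan is to recast the two error notions as operator norms and to interpolate, exactly as in the reversible case but feeding in the spectral-gap mean-square bound. Fix $n,n_0$ and consider the linear map $T\colon f\mapsto S_{n,n_0}(f)-\E_\pi(f)$, viewed as an operator from $L_p(\pi)$ into $L_q(\P_\nu)$, where $\P_\nu$ is the law of the chain started in $\nu$. With this notation $e_1(S_{n,n_0},f)=\norm{Tf}_{L_1(\P_\nu)}$ and the root mean square error equals $\norm{Tf}_{L_2(\P_\nu)}$. I would bound $\norm{T}_{L_p(\pi)\to L_1(\P_\nu)}$ by the Riesz--Thorin theorem, keeping the target exponent fixed at $1$ (so $q_0=q_1=1$, which is allowed and leaves the codomain equal to $L_1(\P_\nu)$) and interpolating only the source exponent between two endpoints.

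For the left endpoint I would prove a crude bound at $L_{1+\delta}(\pi)$ that is constant in $n$. Splitting $e_1\le \E_\nu\abs{S_{n,n_0}(f)}+\abs{\E_\pi(f)}$ and writing $h_m$ for the $\pi$-density of $\nu K^m$, each summand is $\int_G\abs{f}\,h_{j+n_0}\,\dint\pi$. The key point is that $h_m=(P^*)^m\frac{d\nu}{d\pi}$ and $P^*$, being the Markov operator of the time-reversed chain, is an $L_\infty$-contraction, so $\norm{h_m}_\infty\le\norm{\frac{d\nu}{d\pi}}_\infty$ even without reversibility. Hölder with exponents $1+\delta$ and $\tfrac{1+\delta}{\delta}$ together with the interpolation estimate $\norm{h_m}_{(1+\delta)/\delta}\le\norm{h_m}_1^{\delta/(1+\delta)}\norm{h_m}_\infty^{1/(1+\delta)}=\norm{h_m}_\infty^{1/(1+\delta)}$ then yields $\norm{T}_{L_{1+\delta}(\pi)\to L_1(\P_\nu)}\le C_0$ with $C_0$ depending only on $\norm{\frac{d\nu}{d\pi}}_\infty$ and $\delta$.

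For the right endpoint I would use a mean-square bound at $L_2(\pi)$ and pass to $L_1$ via $\norm{\cdot}_{L_1(\P_\nu)}\le\norm{\cdot}_{L_2(\P_\nu)}$. To obtain it, I would expand the squared mean-square error as a double sum of covariances $\E_\nu[g(X_{i+n_0})g(X_{j+n_0})]$ with $g=f-\E_\pi(f)$ mean-zero, and write the marginal density as $1+r_m$ with $r_m=(P^*)^m(\frac{d\nu}{d\pi}-1)$. The stationary part is controlled by $\norm{P^kg}_2\le(1-\gap(P))^k\norm{g}_2$, which holds for any chain with a spectral gap and gives the clean rate $\norm{g}_2^2/(n\,\gap(P))$; the remaining part carries the dependence on $\nu$ through $r_m$. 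I would then apply Riesz--Thorin with $1/p=(1-\theta)/(1+\delta)+\theta/2$, use subadditivity of $t\mapsto t^\theta$ (valid since $\theta\le1$) to separate the two contributions of the mean-square bound into the two displayed summands, and choose $n_0$ as in the hypothesis to absorb the $r_m$-part.

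The main obstacle is the non-reversibility in the right endpoint, specifically making the burn-in reduce the influence of $\nu$. The spectral gap only controls $r_m$ in $L_2(\pi)$, giving $\norm{r_m}_2\le(1-\gap(P))^m\norm{r_0}_2$, whereas the covariance estimate needs $r_m$ paired against $g$ (which lies only in $L_2$), i.e.\ control of $r_m$ in a norm strictly stronger than $L_2$; there the only $n$-uniform information is the non-decaying bound $\norm{r_m}_\infty\le\norm{r_0}_\infty$. Interpolating the $L_2$-decay against the $L_\infty$-bound produces usable decay only at the reduced rate $(1-\gap(P))^{\delta m}\approx e^{-\delta\,\gap(P)\,m}$. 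This is exactly why the burn-in must scale like $(\delta\,\gap(P))^{-1}\log(\cdots)$ and why the attainable order degrades from the optimal $n^{1/p-1}$ to the exponent $1-\tfrac{1+\delta}{p}$; letting $\delta\downarrow0$ recovers the optimal order at the price of a larger burn-in and constant. In the reversible, uniformly ergodic setting of Theorem~\ref{thm: error_bound_uniform} this obstacle disappears, because the spectral theorem diagonalizes $P$ and controls $r_m$ directly, which is why one there reaches the optimal order with $\delta=0$.
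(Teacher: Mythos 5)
Your overall framework---recasting the errors as operator norms of $T(f)=S_{n,n_0}(f)-\E_\pi(f)$ and applying Riesz--Thorin---is the same as the paper's, but the pair of endpoints you interpolate between is not, and your right endpoint is precisely the bound that does not exist. You require a mean square error bound for $f\in L_2(\pi)$ in which the $\nu$-dependent part can be ``absorbed'' by the burn-in. The paper states explicitly (discussion after Theorem~\ref{thm: error_bound_spectral_gap}) that such an $L_2$ bound is not known, even for reversible chains---this is the entire reason the parameter $\delta$ and the loss in the exponent appear at all. Your own covariance expansion shows why: the $\nu$-dependent term is $\int_G r_{i+n_0}\, g\, P^{j-i}g\, \dint\pi$ with $g=f-\E_\pi(f)$, and when $g$ lies only in $L_2$ the product $g\,P^{j-i}g$ lies only in $L_1$, so $r_m$ must be paired in $L_\infty$, where the only uniform bound $\norm{r_m}_\infty\le\norm{\frac{d\nu}{d\pi}-1}_\infty$ has no decay in $m$. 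Your proposed repair---interpolating the $L_2$-decay of $r_m$ against its $L_\infty$-bound---produces decay only in $\norm{r_m}_q$ for \emph{finite} $q$, and to exploit that you need $g\,P^{j-i}g\in L_{q'}$ with $q'>1$, i.e. $g\in L_{2q'}$ with $2q'>2$: strictly more integrability than an $L_2$ endpoint supplies. This forces the MSE endpoint to sit at $L_{p_2}$ with $p_2>2$, which is exactly what the paper does: Proposition~\ref{prop: mse_geometric_erg} (from \cite{Ru12}) with $p_2=2(1+\delta)$, whose constant $8\sqrt{p_2}/\sqrt{p_2-2}$ diverges as $p_2\downarrow 2$, interpolated against $(L_{1+\delta}\to L_1)$ in Lemma~\ref{prop: interpolation_geometric}. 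A warning sign in your version: your interpolation would give the rate exponent $\frac{p-(1+\delta)}{p(1-\delta)}$, strictly better than the theorem's $1-\frac{1+\delta}{p}$, i.e. the missing endpoint would prove more than the authors (who say they cannot) could.

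Your left endpoint is also too weak as written, for the same structural reason. Bounding the marginal densities by $\norm{h_m}_\infty\le\norm{\frac{d\nu}{d\pi}}_\infty$ gives a constant $C_0\approx\norm{\frac{d\nu}{d\pi}}_\infty^{1/(1+\delta)}$ with no $n_0$-dependence whatsoever, so after interpolation the factor $C_0^{1-\theta}$ survives for \emph{every} choice of burn-in; since the theorem's constant $8$ is independent of $\nu$ (think $\norm{\frac{d\nu}{d\pi}-1}_\infty=10^{30}$, as in the paper's tables), this cannot yield the statement. The cure is the paper's Lemma~\ref{lem: abs_err_geometric_erg}: split $h_m=1+r_m$ and use the operator-norm estimate \eqref{eq: 11},
\[
\norm{(P^*)^m-\E_\pi}_{L_{\widetilde{p}_1}\to L_{\widetilde{p}_1}}
=\norm{P^m-\E_\pi}_{L_{p_1}\to L_{p_1}}
\le 2\,(1-\gap(P))^{2\frac{p_1-1}{p_1}m},
\]
itself obtained by Riesz--Thorin between $L_1$ and $L_2$ plus duality; pairing $r_{j+n_0}\in L_{\widetilde{p}_1}$ against $\abs{f-\E_\pi(f)}\in L_{p_1}$ is legitimate here because only \emph{one} factor of $f$ occurs, and it gives the burn-in decay $(1-\gap(P))^{2\frac{\delta}{1+\delta}n_0}$ that the final bound needs. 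In short: both endpoint bounds must decay in $n_0$; your left endpoint can be repaired along exactly these lines, but your right endpoint cannot remain at $L_2(\pi)$, and with it the proposal's claimed rate collapses back to the theorem's.
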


 Let us interpret the result. 
 The burn-in $n_0$ is used to
 decrease the dependence on the initial distribution and $n$ denotes
 the sample size of the average procedure.
 The convergence of the Markov chain is captured by the spectral gap.
 However, an additional parameter $\delta \in (0,1]$ appears. This parameter
 measures a minimal integrability and provides a relation between integrability
 and convergence of the Markov chain. 
 It is fair to ask whether one can remove the additional parameter $\delta$. 
 The reason for the $\delta$
 lies in the mean square error bounds 
 for $L_p$-functions that enter our proofs via the Riesz-Thorin theorem. 
 These bounds deteriorate as $p$ approaches 2, 
 see Proposition~\ref{prop: mse_geometric_erg} below.
 With a mean square error bound for $L_2$-functions
 one could achieve $\delta=0$. 
 To our knowledge, such bounds are not known,
 even under the additional assumption of reversibility.

 For $\delta$ close to zero, the rate of convergence  in the error bound is arbitrarily close to
 optimal. 
 But we pay a price. Namely, the 
 burn-in $n_0$ increases for decreasing $\delta$. 
 There is thus
 a trade-off in determining $\delta$ and one might ask for an optimal $\delta$.
 After some computations by hand one can guess that
 \begin{equation} \label{eq: heuristic_delta}
     \hat \delta = \frac{\sqrt{p-1}}{\sqrt{p}} 
   \left( \frac{ \log(64\norm{\frac{d\nu}{d\pi}-1}_{\infty})}
   {(16\,\eps^{-1})^{p/(p-1)} \log(16\,\eps^{-1})} \right)^{1/2},
 \end{equation}
 is a good choice for $\delta$ to achieve an error estimate smaller 
 than $\varepsilon$. We justify this heuristic $\hat \delta$ as follows:
 For different values of $p$ and $\eps$ we 
 numerically compute $\delta^*$ which minimizes
 the total size of the Markov chain sample $N(\delta)=n(\delta)+n_0(\delta)$ which is needed to obtain
 an estimate with error $\eps$ from Theorem~\ref{thm: error_bound_spectral_gap}. 
 In Table~\ref{tab: eps_fest} and Table~\ref{tab: eps_variabel} one can see
 that $\hat \delta$ and $\delta^*$ have the same behavior for decreasing $\eps\in(0,1]$ 
 and decreasing $p\in(1,2)$. Furthermore, the numbers $N(\hat \delta)$ and $N(\delta^*)$
 are quite close.
 For $p$ close to $1$ (Table~\ref{tab: eps_fest}) we see that
the penalty for the lack of integrability 
leads to a drastic increase in $N(\delta^*)$. 
This is not surprising, since for $p$ close to $1$ 
Theorem~\ref{thm: error_bound_uniform}
exhibits a similar behavior.
However, for $p$ not too far away from $2$, the total size
of the Markov chain sample $N(\delta^*)$ and $N(\hat \delta)$ is reasonable.

 \begin{table} 
 \begin{center}
 \begin{tabular}{|c|c|c| c| c|}
 \hline \vspace{-9pt} & & & & \\
 $p$ & $\delta^*$& $N(\delta^*)$
 & $\hat \delta$& $N(\hat \delta)$ \\
\hline \vspace{-11pt} & & & & \\
1.1 & $5.01 \cdot 10^{-11}$ &$9.83 \cdot 10^{22}$  
&$8.64 \cdot 10^{-13} $&$9.83 \cdot 10^{22}$ \\
\hline \vspace{-11pt} & & & & \\
1.3 & $8.39 \cdot 10^{-5}$  & $1.88 \cdot 10^{10} $
& $3.06 \cdot 10^{-5}$ & $1.89 \cdot 10^{10}$\\
\hline \vspace{-11pt} & & & & \\
1.5 & $2.31  \cdot 10^{-3}$  &$ 5.99 \cdot 10^7$  
& $1.08  \cdot 10^{-3}$ & $6.21 \cdot 10^7$\\
  \hline 
 \end{tabular}
 \end{center}
 \caption{
 Size of the Markov chain sample $N(\delta)$ 
 required by Theorem 2 for different values of $p$ and $\delta \in \{\delta^*,\hat{\delta}\}$. 
 The parameters are $\norm{f}_{p} = 1$, $\gap(P)=0.01$,
  $\norm{\frac{d\nu}{d\pi}-1}_{\infty}=10^{30}$ and $\eps=0.1$.}
\label{tab: eps_fest}
  \end{table}

  \begin{table}  
 \begin{center}
 \begin{tabular}{|c|c|c| c| c|c|c|c|c|c|c|}
 \hline \vspace{-9pt} & & & & \\
 $\varepsilon$ & $\delta^*$& $N(\delta^*)$
 & $\hat \delta$& $N(\hat \delta)$ \\
\hline \vspace{-11pt} & & & &\\
0.01 & $4.90  \cdot 10^{-7}$  &$ 3.82 \cdot 10^{14}$  
& $1.73  \cdot 10^{-7}$ & $3.82 \cdot 10^{14}$\\
  \hline \vspace{-11pt} & & & &\\
0.2 & $3.92  \cdot 10^{-4}$  &$ 1.01 \cdot 10^9$  
& $1.48  \cdot 10^{-4}$ & $1.04 \cdot 10^9$\\
  \hline \vspace{-11pt} & & & &\\
0.5 & $2.85  \cdot 10^{-3}$  &$ 2.66 \cdot 10^7$  
& $1.21  \cdot 10^{-3}$ & $2.89 \cdot 10^6$\\
\hline
   \end{tabular}
 \end{center}
\caption{
 Size of the Markov chain sample $N(\delta)$ 
 required by Theorem 2 for different values of $\eps$ and $\delta \in \{\delta^*,\hat{\delta}\}$. 
 The parameters are $\norm{f}_{p} = 1$, $\gap(P)=0.01$,
  $\norm{\frac{d\nu}{d\pi}-1}_{\infty}=10^{30}$ and $p=1.3$.}
\label{tab: eps_variabel}
	   \end{table}

\section{Auxiliary results and proofs} \label{sec: prel}

By \eqref{eq: MO} the transition kernel $K$ induces the Markov operator $P$ acting
on functions and by
\[
 \nu P(A) = \int_G K(x,A)\, {\rm d}\nu(x), \quad A \in \Borel(G),
\]
it induces the Markov operator acting on signed
measures $\nu$ on $(G,\Borel(G))$. If $\nu$ is absolutely continuous 
with respect to $\pi$, 
then also $\nu P$ is absolutely continuous with respect to $\pi$.
In particular, 
$\frac{d (\nu P)}{d \pi} = P^* (\frac{d\nu}{d\pi})$ 
with the adjoint operator $P^*$,
for details see \cite[Lemma~3.9]{Ru12}.

For $p\in[1,\infty]$ we denote by $L_p=L_p(\pi)$ the space of all functions 
$f\colon G \to \R$ with $\norm{f}_{p} < \infty$. 
Note that
$\norm{P}_{L_p\to L_p}=1$
and that $P\colon L_2 \to L_2$ 
is self-adjoint whenever the Markov chain is reversible.

Now we define a generalized error term of $S_{n,n_0}$ with parameter $p\in [1,2]$ 
for the computation of $\E_{\pi}(f)$. Let
\[
e_p(S_{n,n_0},f) := \left(\E \abs{S_{n,n_0}(f)-\E_{\pi}(f)}^p\right)^{1/p}.
\]
Note that for $p=1$ this is the absolute mean error and for $p=2$ we have
the root mean square error. 
The expectation in the definition of the error is taken 
with respect to the 
distribution, say $\mu_{\nu,K}$, 
of the trajectory $X_1,\dots,X_{n+n_0}$.


\subsection{Proof of Theorem~\ref{thm: error_bound_uniform}}
We prove that under the assumptions of Theorem~\ref{thm: error_bound_uniform} 
the inequality
\begin{equation}  \label{eq: to_prove_unif_erg}
 \begin{split}
   \sup_{\norm{f}_{p}\leq 1} e_p(S_{n,n_0}, f)
& \leq 2 ^{2/p-1} \left( 1+ 2 \alpha ^{n_0}  M \norm{\frac{d\nu}{d\pi}-1}_{\infty}\right)^{2/p-1} \\
& \; \times \left( \frac{2^{1-1/p}}{(\,n\cdot \gap(P)\, )^{^{1-1/p}}} 
+ \left( \frac{4\, M \norm{\frac{d\nu}{d\pi}-1}_{\infty} \alpha^{n_0}}{n^2(1-\alpha)^2} \right)^{1-1/p} \right)
 \end{split}
\end{equation}
holds for $p\in (1,2]$. From this upper bound the assertion of the theorem
follows immediately by $\log \a^{-1} \geq 1-\a$ for $\a \in [0,1]$, by the choice of the burn-in $n_0$ and since $e_1(S_{n,n_0},f) \leq e_{p}(S_{n,n_0},f)$.
We begin with two auxiliary inequalities. 
Proposition \ref{prop: mse_uniform_erg} provides an upper bound on 
the root mean square error for $f\in L_2$, 
see \cite[Theorem~3.34]{Ru12}. 
Lemma \ref{lem: abs_err} states that the absolute 
mean error is bounded for $f\in L_1$.

\begin{proposition} \label{prop: mse_uniform_erg}
Under the assumptions of Theorem~\ref{thm: error_bound_uniform} we have
\[
 \sup_{\norm{f}_2 \leq 1} e_2 (S_{n,n_0},f) 
 \leq \frac{\sqrt{2}}{(\,n\cdot \gap(P)\, )^{1/2}} 
  + \frac{2\, M^{1/2} \norm{\frac{d\nu}{d\pi}-1}_{\infty}^{1/2} \alpha^{n_0/2} }{n(1-\alpha)}. 
\]
\end{proposition}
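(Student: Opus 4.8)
The plan is to prove a bound on the root mean square error $e_2(S_{n,n_0},f)$ for square-integrable $f$ under uniform ergodicity, which is quoted from \cite[Theorem~3.34]{Ru12}. Since the statement is explicitly attributed to an external reference, the cleanest approach is to identify which known mean square error bound this corresponds to and show how the stated constants arise. First I would recall the standard decomposition of the mean square error into a variance-type term governed by the spectral gap and a bias-type term governed by the distance of the initial distribution from stationarity.

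The core of the argument proceeds in two independent estimates. First I would treat the stationary case $\nu=\pi$, where $S_{n,n_0}(f)$ is an average along a stationary reversible chain. Here the variance bound $\Var(S_{n,0}(f)) \leq \frac{2\,\norm{f}_2^2}{n\cdot\gap(P)}$ is the classical spectral-gap estimate for reversible Markov chains; expressing it via $\norm{P-\E_\pi}_{L_2\to L_2}=1-\gap(P)$ and summing the resulting geometric series in the autocovariances yields the factor $\sqrt 2$ in the leading term. Second I would quantify the cost of starting from $\nu\neq\pi$. Using the Markov operator on measures and the identity $\frac{d(\nu P)}{d\pi}=P^*(\frac{d\nu}{d\pi})$ noted in the text, together with the uniform ergodicity hypothesis \eqref{eq: unif_erg}, one controls the deviation of the law of $X_{j+n_0}$ from $\pi$. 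The burn-in enters through the factor $\alpha^{n_0}$, and the density bound $\norm{\frac{d\nu}{d\pi}-1}_\infty$ together with $M$ governs the size of the perturbation. A Cauchy--Schwarz step on the cross terms then produces the second summand with its $\frac{1}{n(1-\alpha)}$ scaling and the square-root dependence on $M$, $\norm{\frac{d\nu}{d\pi}-1}_\infty$, and $\alpha^{n_0}$.

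The two contributions are then combined by the triangle inequality in $L_2$: writing $e_2(S_{n,n_0},f)$ for the chain started at $\nu$ as the stationary error plus a correction, I would bound each piece by the corresponding term and take the supremum over $\norm{f}_2\leq 1$. The factor $1-\alpha$ rather than $\gap(P)$ in the denominator of the second term reflects that the initial-distribution correction is controlled by the total variation mixing rate $\alpha$, whereas the variance term is controlled by the $L_2$ spectral gap; uniform ergodicity guarantees $\gap(P)\geq 1-\alpha$, so the two rates are compatible.

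The main obstacle I anticipate is tracking the exact constants so that they match the stated bound, rather than the conceptual structure, which is standard. In particular, obtaining the precise power $\alpha^{n_0/2}$ and the coefficient $2$ in the second term requires care in how the total variation bound \eqref{eq: unif_erg} is converted into an $L_2$ perturbation estimate via the density $\frac{d\nu}{d\pi}-1$, and in how the geometric sum over the $n$ summands is organized. Since the result is cited verbatim from \cite[Theorem~3.34]{Ru12}, the honest and efficient route is to invoke that theorem directly and merely verify that its hypotheses are met under the assumptions of Theorem~\ref{thm: error_bound_uniform}, reproducing the constants as stated.
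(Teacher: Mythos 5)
Your proposal takes essentially the same approach as the paper: the paper gives no independent proof of Proposition~\ref{prop: mse_uniform_erg} and simply cites \cite[Theorem~3.34]{Ru12}, which is exactly the route you settle on, and your sketch of the underlying variance/bias decomposition (stationary spectral-gap term plus an initial-distribution correction via $\frac{d\nu}{d\pi}$ and uniform ergodicity) is consistent with how that cited bound is obtained. Invoking the reference and checking its hypotheses is the correct and intended argument here.
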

\begin{lemma} \label{lem: abs_err}
Under the assumptions of Theorem~\ref{thm: error_bound_uniform} we have
\[
 \sup_{\norm{f}_1\leq 1 } e_1(S_{n,n_0}, f) 
 \leq 2  + 4 \alpha^{n_0} M \norm{\frac{d\nu}{d\pi}-1}_{\infty}.
\]
 \end{lemma}
\begin{proof}
By the uniform ergodicity we have with \cite[Proposition~3.24]{Ru12}
\begin{equation} \label{eq: L_infty_erg}
  \norm{P^n-\E_{\pi}}_{L_\infty \to L_\infty} \leq \a^n 2M, \quad n\in \mathbb{N}.
\end{equation}
With the adjoint operator $P^*$ of $P$ and by
$\frac{d (\nu P)}{d \pi} = P^* (\frac{d\nu}{d\pi})$, see \cite[Lemma~3.9]{Ru12},
we obtain
\begin{equation}  \label{eq: later}
 \begin{split}
   & \quad\; e_1(S_{n,n_0}, f) 
 \leq \frac{1}{n} \sum_{j=1}^n \mathbb{E} \abs{f(X_{j+n_0})-\mathbb{E}_{\pi}(f)} \\
& = \frac{1}{n} \sum_{j=1}^n \int_G \abs{f(x)-\mathbb{E}_{\pi}(f)} (P^*)^{j+n_0}\left(\frac{d \nu}{d\pi}\right)(x)\, {\rm d}\pi(x)\\
& = \frac{1}{n} \sum_{j=1}^n 
\left(\norm{f-\mathbb{E}_{\pi}(f)}_{1} 
+ \int_G \abs{f(x)-\mathbb{E}_{\pi}(f)} ((P^*)^{j+n_0}-\E_{\pi})\left(\frac{d \nu}{d\pi}-1\right)(x)\, {\rm d}\pi(x)\right).
 \end{split}
\end{equation}
\dr{Further, $\norm{f-\E_\pi(f)}_1 \leq 2 \norm{f}_1$ and}
by the assumed reversibility we have $P=P^*$ which leads to
\begin{align*}
e_1(S_{n,n_0}, f) 
\underset{\eqref{eq: L_infty_erg}}{\leq}  2 \norm{f}_{1}  \left( 1+ 2 \alpha^{n_0} M \norm{\frac{d \nu}{d\pi}-1}_{\infty}  \right).
 \end{align*}

 \end{proof}
Now we prove \eqref{eq: to_prove_unif_erg}. 
Recall that $\mu_{\nu,K}$ denotes the distribution of the sample trajectory and
consider the linear operator $T\colon L_{p}(\pi) \to L_p(\mu_{\nu,K})$
given by
\begin{equation}  \label{eq: op_interpol}
 T(f) = S_{n,n_0}(f)-\E_{\pi}(f).
\end{equation}
Further, note that
$
 \norm{T}_{L_p(\pi) \to L_p(\mu_{\nu,K})} = \sup_{\norm{f}_{p}\leq 1} e_p(S_{n,n_0},f).
$
By Proposition~\ref{prop: mse_uniform_erg} and Lemma~\ref{lem: abs_err} we obtain
 $$\norm{T}_{L_1(\pi)\to L_1(\mu_{\nu,K})} \leq M_1 
\;\;\text{ and}\;\; 
  \norm{T}_{L_2(\pi)\to L_2(\mu_{\nu,K})} \leq M_2, $$
with 
\begin{align*}
  M_1 & = 2+ 4 \alpha^{n_0} M \norm{\frac{d\nu}{d\pi}-1}_{\infty},\\
  M_2 & =  \frac{\sqrt{2}}{(\,n\cdot \gap(P)\, )^{1/2}} + \frac{2\, M^{1/2} \norm{\frac{d\nu}{d\pi}-1}_{\infty}^{1/2} \alpha^{n_0/2}}{n(1-\alpha)}.
\end{align*}
The application of Proposition~\ref{prop: riesz_thorin} (Riesz-Thorin theorem)
leads to
$\norm{T}_{L_p(\pi) \to L_p(\mu_{\nu,K})} \leq M_1 ^{1-\theta} M_2^\theta$ 
with $\theta= 2-2/p$ and
\eqref{eq: to_prove_unif_erg} now follows
by 
$(x+y)^{r} \leq x^r + y^r$ for $x,y\geq 0$ and $r\in [0,1]$.

\subsection{Proof of Theorem~\ref{thm: error_bound_spectral_gap}}
We prove that for any $\delta\in(0,1]$ and $p\in[1+\delta,2]$
under the assumptions of Theorem~\ref{thm: error_bound_spectral_gap} 
\begin{equation}  \label{eq: to_prove_gap}
 \begin{split}
&   \sup_{\norm{f}_{p}\leq 1} e_1(S_{n,n_0}, f)
\leq 2\left( 2+ 4 (1-\gap(P))^{2 \frac{n_0\delta}{1+\delta}} \norm{\frac{d\nu}{d\pi}-1}_{\infty}\right)^{2\frac{1+\delta}{p}-1} \\
& \quad\; \times \left( \frac{2^{1-\frac{1+\delta}{p}}}{(\,n\cdot \gap(P)\, )
  ^{1-\frac{1+\delta}{p}}} 
  +  \frac{\left(64\,
   \frac{1+\delta}{\delta} 
  \norm{\frac{d\nu}{d\pi}-1}_{\infty} 
 (1-\gap(P))^{2 \frac{n_0 \delta}{1+\delta}}\right)^{1-\frac{1+\delta}{p}}}{\left(n^2\cdot\gap(P)^2\right)^{1-\frac{1+\delta}{p}} }
\right).
 \end{split}
\end{equation}
From this upper bound and the choice
\[
 n_0 \geq \frac{1+\delta}{2\delta}\cdot \frac{\log(\frac{32(1+\delta)}{\delta} \norm{\frac{d\nu}{d\pi}-1}_{\infty})}{\log(1-\gap(P))^{-1}}
\]
the assertion of the theorem follows 
by taking $\log(1-\gap(P))^{-1} \geq \gap(P)$ and $\delta \in (0,1]$ into 
account.
We next state two auxiliary inequalities with parameters $p_1\in [1,2]$ and $p_2 \in (2,4]$: 
By Proposition \ref{prop: mse_geometric_erg} we have an upper bound on the root mean square error 
for $f\in L_{p_2}$, 
see \cite[Theorem~3.41]{Ru12}. Lemma \ref{lem: abs_err_geometric_erg}
states that the absolute mean error is bounded for $f\in L_{p_1}$.
\begin{proposition} \label{prop: mse_geometric_erg}
Under the assumptions of Theorem~\ref{thm: error_bound_spectral_gap} we have 
\begin{align*}
 \sup_{\norm{f}_{p_2}\leq 1 } e_2(S_{n,n_0}, f) &\leq 
\frac{\sqrt{2}}{(n\cdot \gap(P))^{1/2}}
+ \frac{8\sqrt{p_2}}{\sqrt{p_2-2}}
\cdot \frac{ \norm{\frac{d\nu}{d\pi}-1}_{\infty}^{1/2}  
(1-\gap(P))^{n_0 \frac{p_2-2}{p_2}} }
{n\cdot \gap(P) }. 
\end{align*}
\end{proposition}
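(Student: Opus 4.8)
The plan is to bound the squared quantity $e_2(S_{n,n_0},f)^2$ and to take a square root only at the end, in the spirit of the restatement of \cite[Theorem~3.41]{Ru12} to the present normalisation. Writing $g=f-\E_{\pi}(f)$, so that $\E_{\pi}(g)=0$, expansion of the square gives
\[
 e_2(S_{n,n_0},f)^2=\frac{1}{n^2}\sum_{i,j=1}^n \E\!\left[g(X_{i+n_0})\,g(X_{j+n_0})\right].
\]
The first step is to rewrite each summand through the Markov operator and the initial density $\frac{d\nu}{d\pi}$: for $i\le j$, conditioning on $X_{i+n_0}$ and recalling that the law of $X_{i+n_0}$ has $\pi$-density $(P^*)^{i+n_0-1}\frac{d\nu}{d\pi}$, one obtains
\[
 \E\!\left[g(X_{i+n_0})g(X_{j+n_0})\right]=\scalar{g\cdot P^{j-i}g}{(P^*)^{i+n_0-1}\tfrac{d\nu}{d\pi}}.
\]
I would then split $\frac{d\nu}{d\pi}=1+(\frac{d\nu}{d\pi}-1)$, which separates a stationary contribution (the term with $1$) from a penalty (the term with $\frac{d\nu}{d\pi}-1$) that carries the entire dependence on the initial distribution.

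For the stationary contribution, $\frac{1}{n^2}\sum_{i,j}\scalar{g}{P^{|i-j|}g}$, I would invoke the spectral gap. Since $\E_{\pi}(g)=0$ one has $\scalar{g}{P^m g}=\scalar{g}{(P^m-\E_{\pi})g}$, and the identity $(P-\E_{\pi})^m=P^m-\E_{\pi}$ (valid without reversibility) gives $\norm{P^m-\E_{\pi}}_{L_2\to L_2}\le(1-\gap(P))^m$. Summing the resulting two-sided geometric series over $|i-j|$ produces a factor of order $\gap(P)^{-1}$, so this part is at most $\frac{2}{n\,\gap(P)}\norm{g}_2^2$; together with $\norm{g}_2\le\norm{f}_{p_2}$ this is exactly the leading term $\frac{\sqrt2}{(n\,\gap(P))^{1/2}}$ after taking the square root.

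The penalty is where the hypotheses $p_2\in(2,4]$ and finiteness of $\norm{\frac{d\nu}{d\pi}}_\infty$ enter, through the interpolation idea underlying the whole paper. Using $\E_{\pi}(\frac{d\nu}{d\pi}-1)=0$ one replaces $(P^*)^m$ by $(P^*)^m-\E_{\pi}$, and then estimates the density flow $((P^*)^m-\E_{\pi})(\frac{d\nu}{d\pi}-1)$ by interpolating between two extremes: its crude $L_\infty$-bound $\le 2\norm{\frac{d\nu}{d\pi}-1}_\infty$, coming from $\norm{P^*}_{L_\infty\to L_\infty}=1$, and its $L_2$-bound $\le(1-\gap(P))^m\norm{\frac{d\nu}{d\pi}-1}_\infty$, coming from the spectral gap. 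The factor $g\cdot P^{|i-j|}g$ lies in $L_{p_2/2}$, and its Hölder-conjugate exponent $\frac{p_2}{p_2-2}$ is at least $2$ precisely because $p_2\le4$; this is what permits the density flow to be measured in the intermediate space and fixes the interpolation weight. The weight assigned to the decaying $L_2$-bound is governed by $p_2$, and after taking the square root it is the origin both of the fractional burn-in decay $(1-\gap(P))^{n_0\frac{p_2-2}{p_2}}$ and, via $\norm{\frac{d\nu}{d\pi}-1}_\infty^{1-\theta}\norm{\frac{d\nu}{d\pi}-1}_\infty^{\theta}$, of the single power $\norm{\frac{d\nu}{d\pi}-1}_\infty^{1/2}$ appearing in the statement.

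The main obstacle, and the delicate technical core I would import from \cite[Theorem~3.41]{Ru12}, is the simultaneous bookkeeping of the two sources of decay in the penalty sum: the geometric decay in the time-lag $|i-j|$, which lives naturally in $L_2$ and supplies one factor $\gap(P)^{-1}$, and the fractional decay in the burn-in, which comes from interpolating the $L_\infty$ initial density against its $L_2$ relaxation. The tension is that the only integrability available is $f\in L_{p_2}$ with $p_2$ possibly close to $2$, so one must carefully distribute the available powers of $P-\E_{\pi}$ between the two effects rather than extracting each in isolation; it is exactly this loss of integrability that forces the constant $\frac{\sqrt{p_2}}{\sqrt{p_2-2}}$ to blow up as $p_2\downarrow 2$. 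Assembling these estimates and performing the double summation yields the stated root-mean-square error bound.
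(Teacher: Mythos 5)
The paper offers no proof of this proposition at all: it is quoted verbatim from \cite[Theorem~3.41]{Ru12}, so the only ``approach'' available for comparison is that citation. Your sketch is a correct reconstruction of the argument behind the cited result --- the splitting of $\frac{d\nu}{d\pi}$ into $1+(\frac{d\nu}{d\pi}-1)$, the spectral-gap geometric series for the stationary part, and H\"older plus $L_2$--$L_\infty$ interpolation of the density flow $((P^*)^m-\E_\pi)(\frac{d\nu}{d\pi}-1)$ for the penalty, with the remaining powers of $P-\E_\pi$ spent on the time-lag decay in the $L_{p_2}$ operator norm so that the penalty gains the full factor $(n\cdot\gap(P))^{-1}$ --- and it defers the same double-sum bookkeeping to the same source the paper cites, so it is entirely consistent with what the paper does.
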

\begin{lemma} \label{lem: abs_err_geometric_erg}
Under the assumptions of Theorem~\ref{thm: error_bound_spectral_gap} we have 
\begin{align*}
 \sup_{\norm{f}_{p_1}\leq 1 } e_1(S_{n,n_0}, f) 
  &\leq 2 + 4 \, \norm{\frac{d\nu}{d\pi}-1}_{\infty} 
 (1-\gap(P))^{2\frac{p_1-1}{p_1}\, n_0}.
\end{align*}
 \end{lemma}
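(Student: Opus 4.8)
The plan is to follow the proof of Lemma~\ref{lem: abs_err} almost verbatim, replacing the $L_\infty$-ergodicity input \eqref{eq: L_infty_erg} (which is unavailable without uniform ergodicity) by a Riesz-Thorin interpolation that converts the spectral-gap decay of $(P^*)^m-\E_\pi$ on $L_2$ into decay on the space $L_{p_1'}$ dual to $L_{p_1}$. Writing $g=f-\E_\pi(f)$, I would first bound $e_1(S_{n,n_0},f)\le \frac1n\sum_{j=1}^n \E\abs{g(X_{j+n_0})}$ by the triangle inequality and rewrite each summand as an integral against the density of $X_{j+n_0}$. Since $X_{j+n_0}\sim \nu P^{j+n_0}$ with density $(P^*)^{j+n_0}(\frac{d\nu}{d\pi})$ by \cite[Lemma~3.9]{Ru12}, and since $(P^*)^{m}1=1$ while $\E_\pi(\frac{d\nu}{d\pi}-1)=0$, I can split exactly as in \eqref{eq: later}:
\[
\E\abs{g(X_{j+n_0})} = \norm{g}_1 + \int_G \abs{g}\,\bigl((P^*)^{j+n_0}-\E_\pi\bigr)\Bigl(\tfrac{d\nu}{d\pi}-1\Bigr)\,\dint\pi.
\]
The stationary part is harmless: $\norm{g}_1\le 2\norm{f}_1\le 2\norm{f}_{p_1}\le 2$, using that $\pi$ is a probability measure.

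The remainder is the heart of the argument. Applying H\"older with exponents $p_1$ and $p_1'=p_1/(p_1-1)\in[2,\infty]$ gives the factor $\norm{g}_{p_1}\le 2\norm{f}_{p_1}\le 2$ times $\norm{\bigl((P^*)^{j+n_0}-\E_\pi\bigr)(\tfrac{d\nu}{d\pi}-1)}_{p_1'}$. To control the latter I would bound the operator $(P^*)^m-\E_\pi$ at two endpoints. On $L_2$, since $(P^*)^m-\E_\pi$ is the $L_2$-adjoint of $(P-\E_\pi)^m=P^m-\E_\pi$, and $\norm{A^*}_{L_2\to L_2}=\norm{A}_{L_2\to L_2}$, submultiplicativity and $\norm{P-\E_\pi}_{L_2\to L_2}=1-\gap(P)$ yield $\norm{(P^*)^m-\E_\pi}_{L_2\to L_2}\le(1-\gap(P))^m$. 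On $L_\infty$ one has the trivial uniform bound $\norm{(P^*)^m-\E_\pi}_{L_\infty\to L_\infty}\le 2$, as $P^*$ is a Markov operator and $\E_\pi$ an averaging projection, both contractions on $L_\infty$.

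Proposition~\ref{prop: riesz_thorin} then interpolates these endpoints. Choosing $\theta=(2-p_1)/p_1$ so that $1/p_1'=(1-\theta)/2$ gives
\[
\norm{(P^*)^m-\E_\pi}_{L_{p_1'}\to L_{p_1'}} \le 2^{\theta}\,(1-\gap(P))^{m(1-\theta)},\qquad 1-\theta=\tfrac{2(p_1-1)}{p_1}.
\]
Using $2^\theta\le 2$, then $\norm{\tfrac{d\nu}{d\pi}-1}_{p_1'}\le\norm{\tfrac{d\nu}{d\pi}-1}_\infty$ (again because $\pi$ is a probability measure), and finally $m=j+n_0\ge n_0$ together with $1-\gap(P)<1$ and a nonnegative exponent, every summand of the remainder is at most $4\,(1-\gap(P))^{2\frac{p_1-1}{p_1}n_0}\norm{\tfrac{d\nu}{d\pi}-1}_\infty$. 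Averaging over $j$ and adding the stationary part $2$ yields the claimed bound.

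The main obstacle is purely the endpoint bookkeeping for the interpolation: one must identify the correct dual exponent $p_1'$, check that the spectral-gap estimate survives passage to the adjoint and to powers, and verify that the crude $L_\infty$ bound is genuinely uniform in $m$. It is precisely this uniform endpoint that, after interpolation, produces the exponent $2(p_1-1)/p_1$ on $1-\gap(P)$ and matches the stated rate. The boundary cases $p_1=1$ (here $\theta=1$ and no decay survives) and $p_1=2$ (here $\theta=0$ and the full rate $(1-\gap(P))^{n_0}$ appears) serve as useful sanity checks.
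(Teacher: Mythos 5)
Your proof is correct and takes essentially the same approach as the paper: the same decomposition as in the proof of Lemma~\ref{lem: abs_err} up to \eqref{eq: later}, H\"older with the dual exponent $\widetilde{p}_1=p_1/(p_1-1)$, and Riesz--Thorin to obtain geometric decay of the propagator on the dual space, with matching constants. The only cosmetic difference is that you interpolate $(P^*)^m-\E_\pi$ directly between $L_2$ and $L_\infty$, whereas the paper interpolates $P^m-\E_\pi$ between $L_1$ and $L_2$ (inequality \eqref{eq: 11}) and then passes to the adjoint via the duality $\norm{(P^*)^m-\E_\pi}_{L_{\widetilde{p}_1}\to L_{\widetilde{p}_1}}=\norm{P^m-\E_\pi}_{L_{p_1}\to L_{p_1}}$; the two routes yield identical bounds, including the factor $2^{2/p_1-1}=2^{\theta}$.
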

\begin{proof}
For $p_1\in[1,2]$ and $n\in\N$ we obtain by \dr{$P^n-\E_\pi = (P-\E_\pi)^n$} and by
Proposition~\ref{prop: riesz_thorin} 
with $\norm{P^n-\mathbb{E}_\pi}_{L_1 \to L_1} \leq 2$ 
that 
\begin{equation}  \label{eq: 11}
 \norm{P^{n}-\mathbb{E}_\pi}_{L_{p_1}\to L_{p_1}} 
 \leq 2^{2/p_1-1} \norm{P-\mathbb{E}_\pi}_{L_2\to L_2}^{2 \frac{p_1-1}{p_1}\, n}
 \leq  2 (1-\gap(P))^{2\frac{p_1-1}{p_1}\, n}.
\end{equation}
Furthermore, we have
$
\norm{(P^*)^n-\E_\pi}_{L_{\widetilde{p}_1} \to L_{\widetilde{p}_1}}
=\norm{P^n-\E_\pi}_{L_{p_1}\to L_{p_1}},
$
with  $\widetilde{p}_1=\frac{p_1}{p_1-1}$ such that $p_1^{-1}+\widetilde{p_1}^{-1}=1$.
For details we refer to \cite[p.~42]{Ru12}.

We follow the proof of Lemma~\ref{lem: abs_err} until the end 
of \eqref{eq: later}.
Then, by $\norm{f-\mathbb{E}_\pi(f)}_1 \leq 2 \norm{f}_1 \leq 2\norm{f}_{p_1}$
and
H\"{o}lder's inequality with parameters $p_1$ and $\widetilde{p_1}$ 
we obtain
\begin{align*}
e_1(S_{n,n_0}, f) 
& \leq  2 \norm{f}_{p_1}  
\left( 1+ \norm{(P^*)^{j+n_0}-\E_\pi}_{L_{\widetilde p_1} \to L_{\widetilde p_1}} 
\norm{\frac{d \nu}{d\pi}-1}_{{\widetilde p_1}}  \right) \\
& \underset{\eqref{eq: 11}}{\leq} 2 \norm{f}_{p_1}  
\left( 1+ 2 \norm{\frac{d \nu}{d\pi}-1}_{\infty} 
(1-\gap(P))^{2\frac{p_1-1}{p_1}\, n_0}\right).
 \end{align*}

\end{proof}
Relying on Proposition~\ref{prop: mse_geometric_erg} and Lemma~\ref{lem: abs_err_geometric_erg} we can apply similar
interpolation arguments as in the proof
of Theorem~\ref{thm: error_bound_uniform}. 
We obtain the following:
\begin{lemma} \label{prop: interpolation_geometric}
  Let $p_1 \in [1,2]$, $p_2\in (2,4]$ and $p \in [p_1,p_2]$.
  Then, under the assumptions of Theorem~\ref{thm: error_bound_spectral_gap} we have 
\begin{align*}
 \sup_{\norm{f}_p\leq 1 } e_q(S_{n,n_0}, f) &\leq 2\,
 M_1^{\frac{p_1}{p_2-p_1}(\frac{p_2}{p}-1)} \cdot  M_2^{\frac{p_2}{p_2-p_1}(1-\frac{p_1}{p})}
\end{align*}
with
$ q  =	1+\frac{p_2(p-p_1)}{p_2(p+p_1)-2p p_1}  \in [1,2]$ and
\begin{align*}
      M_1 & = 2  + 4 \, \norm{\frac{d\nu}{d\pi}-1}_{\infty} (1-\gap(P))^{2\frac{p_1-1}{p_1}\, n_0},\\   
      M_2 & = \frac{\sqrt{2}}{(n\cdot \gap(P))^{1/2}}
+ \frac{8\sqrt{p_2}}{\sqrt{p_2-2}}
\cdot \frac{ \norm{\frac{d\nu}{d\pi}-1}_{\infty}^{1/2}  
(1-\gap(P))^{n_0 \frac{p_2-2}{p_2}} }
{n\cdot \gap(P) }.
 \end{align*}
 \end{lemma}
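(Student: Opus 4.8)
The plan is to prove Lemma~\ref{prop: interpolation_geometric} by the same Riesz--Thorin interpolation strategy already used for Theorem~\ref{thm: error_bound_uniform}, but now interpolating between the two endpoints $p_1$ and $p_2$ rather than between $1$ and $2$. The operator to interpolate is again $T(f) = S_{n,n_0}(f) - \E_\pi(f)$, viewed as a linear map on the relevant $L_p$ spaces. Lemma~\ref{lem: abs_err_geometric_erg} supplies the bound $\norm{T}_{L_{p_1}(\pi)\to L_1(\mu_{\nu,K})} \leq M_1$, and Proposition~\ref{prop: mse_geometric_erg} supplies $\norm{T}_{L_{p_2}(\pi)\to L_2(\mu_{\nu,K})} \leq M_2$. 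Crucially, the two endpoints differ in \emph{both} the domain exponent (on the $\pi$ side) and the target exponent (on the $\mu_{\nu,K}$ side): one endpoint maps $L_{p_1}$ to $L_1$, the other maps $L_{p_2}$ to $L_2$. So I would invoke the general Riesz--Thorin theorem allowing distinct exponents at the two ends, i.e.\ $T$ maps $L_{p}(\pi)$ to $L_q(\mu_{\nu,K})$ where $\frac1p$ and $\frac1q$ are the convex combinations of the endpoint reciprocals.

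First I would fix the interpolation parameter $\theta\in[0,1]$ determined by the requirement $\frac1p = \frac{1-\theta}{p_1} + \frac{\theta}{p_2}$, and then \emph{derive} $q$ from the companion relation $\frac1q = \frac{1-\theta}{1} + \frac{\theta}{2} = 1 - \frac{\theta}{2}$. Solving the first equation gives $\theta = \frac{p_2(p-p_1)}{p(p_2-p_1)}$, and I would carry this expression through the second relation and simplify to recover the stated closed form $q = 1 + \frac{p_2(p-p_1)}{p_2(p+p_1)-2pp_1}$. I would also verify the exponents on $M_1$ and $M_2$: Riesz--Thorin gives $\norm{T}_{L_p\to L_q}\leq M_1^{1-\theta}M_2^{\theta}$, so I would check that $1-\theta = \frac{p_1}{p_2-p_1}\bigl(\frac{p_2}{p}-1\bigr)$ and $\theta = \frac{p_2}{p_2-p_1}\bigl(1-\frac{p_1}{p}\bigr)$, which is just the same $\theta$ rewritten. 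The harmless factor $2$ in front presumably absorbs a constant from the statement of the interpolation theorem as applied; I would confirm it arises from the norm identification $\norm{T}_{L_p(\pi)\to L_q(\mu_{\nu,K})} = \sup_{\norm{f}_p\leq 1} e_q(S_{n,n_0},f)$ together with whatever normalization constant the cited Riesz--Thorin statement carries.

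The main obstacle I anticipate is purely algebraic bookkeeping: matching the two exponent expressions and producing the slightly awkward formula for $q$ without sign or factor errors, and confirming that the constraints $p_1\in[1,2]$, $p_2\in(2,4]$, $p\in[p_1,p_2]$ indeed force $\theta\in[0,1]$ and $q\in[1,2]$. In particular I would check that $q=1$ exactly when $p=p_1$ and $q=2$ exactly when $p=p_2$, and that $q$ is monotone in between, so the claimed range $q\in[1,2]$ is justified. A secondary point is to state precisely the version of the Riesz--Thorin theorem being used, since the standard statement is for complex interpolation and the operator here acts between real $L_p$ spaces on two different measure spaces; I would cite the same Proposition~\ref{prop: riesz_thorin} invoked earlier, whose hypotheses already cover exactly this configuration of distinct source and target exponents. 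Everything else is a direct substitution of the endpoint bounds $M_1$ and $M_2$ from the preceding two results.
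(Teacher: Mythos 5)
Your proposal matches the paper's proof exactly: the same operator $T$ from \eqref{eq: op_interpol}, the same endpoint bounds $\norm{T}_{L_{p_1}(\pi)\to L_1(\mu_{\nu,K})} \leq M_1$ and $\norm{T}_{L_{p_2}(\pi)\to L_2(\mu_{\nu,K})} \leq M_2$ from Lemma~\ref{lem: abs_err_geometric_erg} and Proposition~\ref{prop: mse_geometric_erg}, and the same application of Proposition~\ref{prop: riesz_thorin} with $\theta=\frac{p_2}{p_2-p_1}\left(1-\frac{p_1}{p}\right)$ and $q^{-1}=1-\frac{\theta}{2}$. The one point you left tentative resolves as you guessed: the factor $2$ is exactly the constant in the second part of Proposition~\ref{prop: riesz_thorin}, which is the version needed here because the endpoint $L_{p_1}\to L_1$ can have $p_1>q_1=1$, so the ordered-exponent (constant-free) case does not apply.
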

 \begin{proof}
 Consider the linear operator $T\colon L_{p}(\pi) \to L_q(\mu_{\nu,K})$
 from \eqref{eq: op_interpol}.
Note that
$
 \norm{T}_{L_p(\pi) \to L_q(\mu_{\nu,K})} = \sup_{\norm{f}_{p}\leq 1} e_q(S_{n,n_0},f).
$
By Lemma~\ref{lem: abs_err_geometric_erg} and Proposition~\ref{prop: mse_geometric_erg}  
we have
\begin{align*}
 \norm{T}_{L_{p_1}(\pi)\to L_1(\mu_{\nu,K})} \leq M_1 
\quad \mbox{and} \quad
  \norm{T}_{L_{p_2}(\pi)\to L_2(\mu_{\nu,K})} \leq M_2. 
\end{align*}
By Proposition~\ref{prop: riesz_thorin} (Riesz-Thorin theorem)
$\norm{T}_{L_p(\pi) \to L_q(\mu_{\nu,K})} \leq 2 M_1 ^{1-\theta} M_2^\theta$ holds for $\theta\in [0,1]$ satisfying $q^{-1}=1-\frac{\theta}2$ and
$p^{-1}=(1-\theta){p_1}^{-1}+ \theta {p_2}^{-1}$, i.e.
$\theta={\frac{p_2}{p_2-p_1}(1-\frac{p_1}{p})}$.
 \end{proof}
Since $e_1(S_{n,n_0},f) \leq e_{q}(S_{n,n_0},f)$ for $q\geq 1$, 
\eqref{eq: to_prove_gap} follows by an application
of Lemma~\ref{prop: interpolation_geometric}
with $p_1=1+\delta$ and $p_2 = 2 (1+\delta)$. 
 
\begin{appendix}
 \section{Riesz-Thorin interpolation theorem}	
 Let $(G,\mathcal{G},\pi)$ and $(\Omega, \mathcal{F},\mu)$ be probability spaces. 
  Let $p\in[1,\infty]$ and let $L_p(\pi)$ be the space of $\mathcal{G}$-measurable functions
  $g\colon G \to \mathbb{R}$ with
  $
    \norm{g}_{p,\pi} = 
    \left(\int_{G} \abs{g(x)}^p {\rm d}\pi(x) \right)^{1/p} < \infty 
  $
  and let $L_p(\mu)$ be the space of $\mathcal{F}$-measurable functions 
  $f\colon \Omega \to \mathbb{R}$
  with  
  $
     \norm{f}_{p,\mu} = \left(\int_{\Omega} \abs{f(x)}^p {\rm d}\mu(x) \right)^{1/p} < \infty.
  $
In the following we formulate a version of the theorem of Riesz-Thorin. 
For details we refer to \cite[Chapter~4: Corollary~1.8, Excercise~5, Corollary~2.3]{BeSh88}.
\begin{prop}[Riesz-Thorin theorem] \label{prop: riesz_thorin}
Let $1\leq p_k \leq  q_k \leq \infty$ for $k=1,2$. 
We assume that $\theta \in [0,1]$ and 
\[
\frac{1}{p} = \frac{1-\theta}{p_1}+\frac{\theta}{p_2}, \qquad
\frac{1}{q} = \frac{1-\theta}{q_1}+\frac{\theta}{q_2}.
\]
Let $T$ be a linear operator from $L_{p_1}(\pi)$ to $L_{q_1}(\mu)$ 
and at the same time from $L_{p_2}(\pi)$ to $L_{q_2}(\mu)$ with
\begin{align*}
\norm{T}_{L_{p_1}(\pi) \to L_{q_1}(\mu)} \leq M_1, \quad\quad 
\norm{T}_{L_{p_2}(\pi) \to L_{q_2}(\mu)} \leq M_2.
\end{align*}
Then 
\begin{equation} \label{eq: oper_norm_interpol}
\norm{T}_{L_p(\pi)\to L_q(\mu)} \leq M_1^{1-\theta} M_2^\theta
\end{equation}
and if, $1\leq p_k, q_k \leq \infty$ with $k=1,2$, 
then \eqref{eq: oper_norm_interpol} is replaced by $\norm{T}_{L_p(\pi)\to L_q(\mu)} \leq 2 M_1^{1-\theta} M_2^\theta
$
.
\end{prop}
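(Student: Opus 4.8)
The statement is the classical Riesz--Thorin theorem, so the plan is to run the standard complex-interpolation argument and then pass to real scalars. First I would reduce the operator-norm bound to a bilinear estimate: by duality $\norm{Tf}_{q,\mu}=\sup\set{\abs{\int_\Omega (Tf)\,g\,\rd\mu}:\norm{g}_{q',\mu}\le 1}$ with $q'$ the conjugate exponent of $q$, and since the simple functions are dense in every $L_r$ with $r<\infty$, it is enough to bound $\abs{\int_\Omega (Tf)\,g\,\rd\mu}$ for simple $f$ with $\norm{f}_{p,\pi}\le 1$ and simple $g$ with $\norm{g}_{q',\mu}\le 1$. The degenerate endpoints where some $p_k$ or $q_k$ equals $1$ or $\infty$ would be treated by the usual approximation, so I would concentrate on the generic case.

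Next I would introduce the analytic family that interpolates the two endpoint norms. Writing $f=\sum_j c_j\,\ind_{A_j}$ and $g=\sum_k d_k\,\ind_{B_k}$ with complex coefficients, define on the closed strip $\set{z:0\le\Re z\le 1}$ the exponents $P(z),Q'(z)$ by $P(z)^{-1}=(1-z)p_1^{-1}+z\,p_2^{-1}$ and $Q'(z)^{-1}=(1-z)(q_1')^{-1}+z\,(q_2')^{-1}$, and set $f_z=\sum_j \abs{c_j}^{p/P(z)}e^{i\arg c_j}\ind_{A_j}$ and $g_z=\sum_k \abs{d_k}^{q'/Q'(z)}e^{i\arg d_k}\ind_{B_k}$; the normalisation is chosen so that $f_\theta=f$ and $g_\theta=g$. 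Then $F(z)=\int_\Omega (Tf_z)\,g_z\,\rd\mu$ is a finite linear combination of entire functions of $z$ (the coefficients are exponentials in $z$), hence analytic in the open strip and, since the $c_j,d_k$ are finitely many and bounded, continuous and bounded on its closure.

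I would then read off the boundary bounds. On the line $\Re z=0$ one has $\Re P(iy)^{-1}=p_1^{-1}$, whence $\norm{f_{iy}}_{p_1,\pi}=\norm{f}_{p,\pi}=1$ and likewise $\norm{g_{iy}}_{q_1',\mu}=1$; H\"older's inequality together with $\norm{T}_{L_{p_1}\to L_{q_1}}\le M_1$ gives $\abs{F(iy)}\le M_1$. The symmetric computation on $\Re z=1$ gives $\abs{F(1+iy)}\le M_2$. An application of the Hadamard three-lines theorem (Phragm\'en--Lindel\"of on the strip) to the bounded analytic $F$ then yields $\abs{F(\theta)}\le M_1^{1-\theta}M_2^{\theta}$; since $F(\theta)=\int_\Omega (Tf)\,g\,\rd\mu$, taking the supremum over $g$ and invoking density establishes $\norm{T}_{L_p(\pi)\to L_q(\mu)}\le M_1^{1-\theta}M_2^{\theta}$ for complex scalars.

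Finally, since the functions here are real-valued, I would deduce the asserted statement by complexification: apply the complex result to $T_{\mathbb C}(u+iv)=Tu+iTv$ and use $\norm{T_{\mathbb C}}_{L_p\to L_q}\le 2\,\norm{T}_{L_p\to L_q}$, which holds in general and accounts for the constant $2$ in the second assertion, while under the ordering $p_k\le q_k$ (which forces $p\le q$ on the interpolated exponents) the complexification is isometric and the constant improves to $1$. The main obstacle I anticipate is not the three-lines step but the bookkeeping around it: verifying that $F$ is genuinely bounded on the whole strip so that Phragm\'en--Lindel\"of applies, handling the degenerate exponents in the density reduction, and pinning down the sharp complexification constant ($1$ versus $2$) that distinguishes the two assertions.
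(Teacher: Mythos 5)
First, a point of comparison: the paper does not actually prove Proposition~\ref{prop: riesz_thorin}; it quotes it from \cite[Chapter~4: Corollary~1.8, Exercise~5, Corollary~2.3]{BeSh88}. So your attempt can only be measured against the standard literature arguments. Your outline (duality reduction to simple functions, the analytic family $f_z,g_z$, Hadamard three lines, then passage from complex to real scalars) is the standard Thorin proof, and it does correctly deliver two things: the complex-scalar theorem with constant $1$, and from it the second assertion of the proposition --- constant $2$ for arbitrary exponents --- via the trivial complexification bound $\norm{T_{\mathbb{C}}(u+iv)}_{q}\leq\norm{Tu}_{q}+\norm{Tv}_{q}\leq 2\norm{T}\norm{u+iv}_{p}$ applied at both endpoints.

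The gap is the first assertion, constant $1$ under the ordering $p_k\leq q_k$, which is precisely the version the paper uses (with diagonal exponents $p_k=q_k$) in the proof of Theorem~\ref{thm: error_bound_uniform} and in \eqref{eq: 11}. You reduce it to the claim that complexification of a real operator is isometric when the source exponent does not exceed the target exponent, but you never prove this claim, and it is not ``bookkeeping'': it is the constant-one Marcinkiewicz--Zygmund inequality. Its proof (write $(a^2+b^2)^{1/2}$ as a Gaussian average $c_r^{-1}(\E\abs{g_1a+g_2b}^{r})^{1/r}$ and apply Minkowski's integral inequality on each side) requires choosing an auxiliary exponent $r$ with $p\leq r\leq q$, which is exactly where the ordering enters; for $q<p$ the isometry fails, e.g.\ the map $(x_1,x_2)\mapsto(x_1+x_2,x_1-x_2)$ from $\ell_\infty^2$ to $\ell_1^2$ has real norm $2$ but complex norm $2\sqrt{2}$. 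Moreover, the isometry is needed at the two endpoint pairs $(p_k,q_k)$ --- so that $\norm{T_{\mathbb{C}}}_{L_{p_k}(\pi)\to L_{q_k}(\mu)}\leq M_k$ feeds into the complex theorem --- not at the interpolated pair $(p,q)$, which is where your phrasing places it; isometry at $(p,q)$ alone would still only give constant $2$. As written, your argument therefore establishes the proposition with constant $2$ in all cases but leaves the constant-$1$ case unproved. For completeness: Bennett--Sharpley obtain that case by Riesz's original real-variable convexity argument (their Corollary~1.8), which works directly over the reals in the ``lower triangle'' $p_k\leq q_k$ and avoids complexification altogether; supplying either that argument or the Marcinkiewicz--Zygmund step would close your gap.
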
  

\end{appendix}

\subsection*{Acknowledgement}
We thank Erich Novak and two anonymous referees for valuable comments. 
D.R. was supported by the DFG Priority Program 1324 and the
 DFG Research Training Group 1523. 
 N.S. was supported by the DFG Priority Program 1324.


%
%
%

\end{document}